\newcommand{\der}{{\rm d}}
\numberwithin{equation}{section}
\def\swappedhead#1#2#3{%
  \thmnumber{\@upn{\the\thm@headfont #2\@ifnotempty{#1}{.~}}}%
  \thmname{#1}%
  \thmnote{ {\the\thm@notefont(#3)}}}
\theoremstyle{plain}
\newtheorem{thm}{Theorem}[section]%
\newtheorem{lem}[thm]{Lemma}%
\newtheorem{cor}[thm]{Corollary}%
\theoremstyle{definition}
\newtheoremstyle{claimstyle}%
   {}
   {}
   {\normalfont}
   {}
   {\itshape}
   {.}
   { }
   {\thmnote{#3}}
\theoremstyle{claimstyle}
\newtheorem*{varclaim}{}
\newcommand*{\defeq}{\mathrel{\vcenter{\baselineskip0.5ex \lineskiplimit0pt
                     \hbox{\scriptsize.}\hbox{\scriptsize.}}}%
                     =}
\renewcommand{\theta}{\vartheta}
\renewcommand{\phi}{\varphi}
\newcommand{\C}{{\mathbb{C}}}
\newcommand{\R}{{\mathbb{R}}}
\newcommand{\im}{\operatorname{Im}}
\newcommand{\re}{\operatorname{Re}}
\newcommand{\HH}{\mathbb{H}}
\newcommand{\B}{\mathcal{B}}
\title{The Eremenko-Lyubich constant}
\begin{document}

\author{Lasse Rempe} 
\address{Dept. of Mathematical Sciences \\
	 University of Liverpool \\
   Liverpool L69 7ZL\\
   UK \\ 
	 \textsc{\newline \indent 
	   \href{https://orcid.org/0000-0001-8032-8580%
	     }{\includegraphics[width=1em,height=1em]{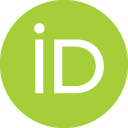} {\normalfont https://orcid.org/0000-0001-8032-8580}}
	       }}
\date{\today}
\email{l.rempe@liverpool.ac.uk}
\subjclass[2020]{Primary 30D15, Secondary 30D05, 37F10.}

\begin{abstract}Eremenko and Lyubich proved that an
  entire function $f$ whose set of singular values is bounded 
  is expanding at points where $\lvert f(z)\rvert$ is large. These expansion
  properties have been at the centre of the subsequent study of this class of
  functions, now called the \emph{Eremenko--Lyubich class}. We improve
  the estimate of Eremenko and Lyubich, and show that the new estimate is
  asymptotically optimal. As a corollary, we obtain an elementary proof that 
  functions in the Eremenko--Lyubich class have lower order at least $1/2$. 
  \end{abstract}

\maketitle

\section{Introduction}
 Eremenko and Lyubich~\cite{alexmisha} introduced the class
   $\B$ of transcendental entire functions whose set of critical and asymptotic
   values is bounded. The dynamics of functions in this
   class, now called the \emph{Eremenko--Lyubich class},
   has been studied intensively in recent years; see e.g.\ 
   \cite{boettcher,strahlen,sixsmithELnew,bishopfolding,bishopELmodels,martipeteshishikura}
   and the survey~\cite{sixsmithELsurvey}. The key property of functions in this 
   class is that they satisfy strong expansion properties near infinity.
   More precisely, let $R>0$ be so large that 
   $\lvert f(0)\rvert \leq R$ and such that all critical and asymptotic values of $f$
   have modulus at most $R$. Then there is a holomorphic function
     \[ F \colon \mathcal{T} \to \HH_{\ln R} \defeq \{ x + iy\colon x > \ln R \} \]
     with $\exp\circ F = f\circ \exp$, where 
       \[ \mathcal{T} = \{\zeta \in\C\colon \lvert f(\exp(\zeta) )\rvert > R \} =
                      \{\zeta\in\C\colon \re F(\zeta) > \ln R\}. \] 
   The map $F$ is a conformal isomorphism when restricted to any connected component
     of $\mathcal{T}$; these components are also called the \emph{(logarithmic) tracts}
      of $F$. 
      
    Eremenko and Lyubich~\cite[Lemma~1]{alexmisha}
     use the Koebe quarter theorem to show that 
    \begin{equation}\label{eqn:EL}
       \lvert F'(\zeta) \rvert \geq \frac{\re F(\zeta) - \ln R}{4\pi}
    \end{equation}
    for all $\zeta\in \mathcal{T}$; in other words, 
         \begin{equation}\label{eqn:ELexp}
        \lvert f'(z)\rvert \geq \frac{(\ln \lvert f(z)\rvert - \ln R)\cdot \lvert f(z)\rvert }{4\pi \lvert z\rvert}
     \end{equation}
      whenever $\lvert f(z)\rvert > R$.

  The inequality~\eqref{eqn:EL} has played a fundamental role in the
   study of the class $\B$. It therefore seems natural
    to ask whether the above estimates can be improved. 
    The purpose of this
    note is to establish the optimal constant in~\eqref{eqn:EL} and~\eqref{eqn:ELexp}. 
  \begin{thm}[Optimal expansion estimates]\label{thm:newEL}
    Let $f$ and $F$ be as above. Then 
           \begin{equation}\label{eqn:newEL} \lvert F'(\zeta )\rvert \geq \frac{\re F(\zeta) - \ln R}{2} 
           \end{equation}
   for all $\zeta\in\mathcal{T}$, and hence 
         \begin{equation}\label{eqn:newELexp}
        \lvert f'(z)\rvert \geq \frac{(\ln \lvert f(z)\rvert - \ln R)\cdot \lvert f(z)\rvert }{2 \lvert z\rvert}
     \end{equation}
      whenever $\lvert f(z)\rvert > R$.
      
   Moreover, the constant $2$ is optimal. Indeed, for 
   $f(z) = \cos \sqrt{z}$, 
    \begin{equation}\label{eqn:cosoptimal} \lim_{r\to +\infty} \frac{\lvert f'(-r)\rvert\cdot r}{ (\ln \lvert f(-r)\rvert) \cdot \lvert f(-r)\rvert}
         =  \frac{1}{2}. \end{equation}
  \end{thm}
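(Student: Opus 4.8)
The plan is to prove the lower bound $\lvert F'(\zeta)\rvert \geq (\re F(\zeta) - \ln R)/2$ by working on a single tract $T$ and using the conformal isomorphism $F\colon T\to\HH_{\ln R}$. Fix $\zeta\in T$ and set $w = F(\zeta)$, so $\re w > \ln R$, and let $\psi = F^{-1}\colon \HH_{\ln R}\to T$ be the inverse branch, so $\psi(w)=\zeta$ and $\psi'(w) = 1/F'(\zeta)$. The key structural fact we should exploit is that $T$ is $2\pi i$-periodic-free in a strong sense: because $\exp$ is injective on $T$ (the tracts are precisely the components of $\mathcal{T}$ on which $F$, hence $f\circ\exp$, is univalent after composing with $\exp$), $T$ contains no two points differing by a nonzero integer multiple of $2\pi i$. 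Equivalently, $\psi(\HH_{\ln R}) = T$ omits all translates $\psi(w) + 2\pi i k$, $k\in\Z\setminus\{0\}$. I would encode this as a statement that $\psi$, composed with $\exp$, is univalent on $\HH_{\ln R}$, and then transfer everything to a half-plane or disk where a sharp distortion estimate is available.

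Concretely, the main step is to replace the Koebe quarter theorem (which costs the factor $4\pi$) by the sharp estimate for univalent maps into a half-plane. Conformally map $\HH_{\ln R}$ to itself normalizing so that $w$ goes to a convenient basepoint; the half-plane $\HH_{\ln R}$ has hyperbolic metric $\lvert dw\rvert/(\re w - \ln R)$ with density $1/(\re w - \ln R)$ at $w$. On the tract side, since $\exp\circ\psi$ is univalent on $\HH_{\ln R}$ with image omitting (at least) a punctured disk's worth of geometry — more precisely its image lies in $\C^*$ and the various sheets forbid overlaps — the image domain $\Omega = \exp(T)$ is a planar domain omitting $0$, and the standard estimate gives that the hyperbolic density of $\Omega$ at the point $p = \exp(\zeta) = \psi_{\exp}(w)$ is at least $\tfrac{1}{2}\cdot\frac{1}{\lvert p\rvert}$ (this is the sharp constant, since $\C^*$ itself has density $1/(2\lvert p\rvert\cdot \text{something})$ — more carefully, one uses that a domain omitting $0$ and with the no-overlap property has density at least $1/(2\lvert p\rvert)$, realized in the limit by slit-type domains). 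Comparing hyperbolic metrics under the conformal map $\exp\circ\psi\colon \HH_{\ln R}\to\Omega$ via the Schwarz–Pick inequality, and using that $\exp$ multiplies the Euclidean metric by $\lvert p\rvert = \lvert\exp(\zeta)\rvert$, yields $\lvert\psi'(w)\rvert\cdot\lvert p\rvert \cdot \tfrac{1}{2\lvert p\rvert}\cdot(\text{log-factor}) \le \frac{1}{\re w - \ln R}$, i.e. $\lvert F'(\zeta)\rvert = 1/\lvert\psi'(w)\rvert \ge (\re w - \ln R)/2$, as desired. Then \eqref{eqn:newELexp} follows by the standard translation $F' = \der(\ln f(\exp\zeta))/\der\zeta = f'(e^\zeta)e^\zeta/f(e^\zeta)$ together with $z = e^\zeta$, $\re F(\zeta) = \ln\lvert f(z)\rvert$.

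I expect the main obstacle to be pinning down \emph{exactly which} extremal-length or hyperbolic-density estimate gives the clean constant $\tfrac12$ with no slack, and justifying that the tract geometry (periodicity of $\exp$, the omitted translates) is precisely what forces it — rather than some larger domain for which the constant would be worse. One clean route: note $g \defeq \exp\circ\psi\colon\HH_{\ln R}\to\C^*$ is a holomorphic map with a \emph{logarithm} on it (namely $\psi$), so $g$ lifts through $\exp$; then $\psi\colon\HH_{\ln R}\to T\subset\C$ is univalent, and the restriction that $T$ meets no horizontal line $\{\im = c\}$ in a set of "height" $\geq 2\pi$ is automatic but not obviously what we need. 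I think the cleanest framing is: $h \defeq \tfrac12(\exp\circ\psi)$ maps $\HH_{\ln R}$ univalently (after a further composition) into the unit disk in a way that, combined with Schwarz–Pick, gives the bound; working out that "$\tfrac12$" is the honest constant — and not absorbing a stray $\pi$ — is the delicate bookkeeping.

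For the optimality claim with $f(z) = \cos\sqrt{z}$, I would compute directly. Write $z = -r$ with $r>0$, so $\sqrt{z} = i\sqrt{r}$ and $f(-r) = \cos(i\sqrt r) = \cosh\sqrt r$, hence $\ln\lvert f(-r)\rvert = \ln\cosh\sqrt r = \sqrt r - \ln 2 + o(1)$ as $r\to\infty$. Next, $f'(z) = -\sin(\sqrt z)/(2\sqrt z)$, so $f'(-r) = -\sin(i\sqrt r)/(2i\sqrt r) = -\sinh\sqrt r/(2\sqrt r)$ and $\lvert f'(-r)\rvert = \sinh\sqrt r/(2\sqrt r)$. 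Therefore
\[
\frac{\lvert f'(-r)\rvert\cdot r}{\ln\lvert f(-r)\rvert\cdot\lvert f(-r)\rvert}
= \frac{\bigl(\sinh\sqrt r/(2\sqrt r)\bigr)\cdot r}{(\sqrt r - \ln 2 + o(1))\cdot\cosh\sqrt r}
= \frac{\sqrt r\,\tanh\sqrt r}{2(\sqrt r - \ln 2 + o(1))}
\longrightarrow \frac12
\]
as $r\to+\infty$, since $\tanh\sqrt r\to 1$ and $\sqrt r/(\sqrt r - \ln 2 + o(1))\to 1$. One should also check the preliminary point that $f = \cos\sqrt z$ is indeed in $\B$ (its only singular value is $f(0) = \cos 0 = 1$, an asymptotic-free entire function of order $1/2$ with a single critical value), so that the theorem applies and $-r\in\mathcal{T}$ for large $r$; this is routine. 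The only mild care needed is that $\ln R$ is a fixed additive constant, hence disappears in the limit, which is why the stated limit does not see $R$.
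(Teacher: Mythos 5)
Your route for \eqref{eqn:newEL} is, in essence, the paper's: use the injectivity of $\exp|_T$ to pass to the image domain $\Omega=\exp(T)\subset\C\setminus\{0\}$ and extract the constant $2$ from the sharp estimate there, rather than in logarithmic coordinates (where the omitted translates $\zeta+2\pi i\mathbb{Z}$ are only at distance $2\pi$, which is how Eremenko--Lyubich's $4\pi$ arises). The paper does this by applying the Koebe quarter theorem to an explicitly normalised univalent map of $\D$ onto $\exp(T-\zeta)$ (Lemma~\ref{lem:newEL}) and records your hyperbolic-density formulation verbatim as Corollary~\ref{cor:Testimate}. Your verification of \eqref{eqn:cosoptimal} is correct and complete, and if anything cleaner than the paper's (via $\tanh$); a minor quibble is that the singular values of $\cos\sqrt z$ are $\pm1$, not just $1$, and membership in $\B$ is not needed for \eqref{eqn:cosoptimal} anyway.

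The genuine gap is at the one step that produces the theorem's constant: the bound $\rho_\Omega(p)\geq 1/(2\lvert p\rvert)$ is asserted but not established, and as you phrase it (``a domain omitting $0$ \dots has density at least $1/(2\lvert p\rvert)$'') it is false. Indeed $\C\setminus\{0\}$ carries no hyperbolic metric at all (its universal cover is $\C$), so the sharpness remark about $\C^*$ is vacuous, and for $\C\setminus\{0,1\}$ the density decays like $1/(\lvert p\rvert\ln\lvert p\rvert)$ as $p\to\infty$, violating the claimed bound. What saves the estimate here is that $\Omega=\exp(T)$ is \emph{simply connected}, because $T$ is simply connected and $\exp|_T$ is injective (your ``no-overlap property'' must be used for exactly this, not merely mentioned); for a simply connected domain, with the normalisation $\rho_{\HH_0}(w)=1/\re w$ you adopt, Koebe gives $\rho_\Omega(p)\geq 1/(2\dist(p,\partial\Omega))\geq 1/(2\lvert p\rvert)$ since $0\notin\Omega$. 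With that in hand, Schwarz--Pick for $\exp\circ\psi$ yields $\tfrac12\lvert\psi'(w)\rvert=\tfrac{1}{2\lvert p\rvert}\cdot\lvert p\rvert\cdot\lvert\psi'(w)\rvert\leq\rho_\Omega(p)\,\lvert(\exp\circ\psi)'(w)\rvert\leq 1/(\re w-\ln R)$, i.e.\ $\lvert F'(\zeta)\rvert\geq(\re F(\zeta)-\ln R)/2$; the ``(log-factor)'' in your display has no business being there, and the ``stray $\pi$'' you worry about never appears precisely because the omitted point $0$ is at distance at most $\lvert p\rvert$ from $p$. So the framework and constants are right, but the decisive estimate is left resting on an incorrect general statement; citing simple connectivity of $\exp(T)$ plus the Koebe quarter theorem -- which is literally the content of the paper's Lemma~\ref{lem:newEL} -- closes the gap.
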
 
  
   Every component of $\mathcal{V}\defeq \exp(\mathcal{T})$ is simply-connected, and $f$ is bounded on $\partial\mathcal{V}$. By an old result
     of Wiman \cite{wiman1903}, it follows that 
     $f$ must have order $\rho(f) \geq 1/2$, where 
        \[ \rho(f) = \limsup_{r\to\infty} \frac{\ln \ln M(r,f)}{\ln r}. \]
      Here $M(r,f) = \max_{\lvert z\rvert = r} \lvert f(z)\rvert$.
        That $\rho(f)\geq 1/2$ also 
         follows 
         from the famous $\cos \pi \rho$ theorem, proved later by Wiman \cite{wimancospirho}. See also \cite[p.~119]{haymanmeromorphic},
           and compare~\cite[p.\ 1788]{langleymultiplepoints} and~\cite[Proof~of~Corollary~2]{bergweilereremenkosingularities}.
        In fact, $f$ must even have \emph{lower order} at least $1/2$, i.e.
        \begin{equation}\label{eqn:lowerorder} \liminf_{r\to\infty} \frac{\ln \ln M(r,f)}{\ln r} \geq \frac{1}{2}. \end{equation}
     According to Heins~\cite{heinsboundedminimummodulus}, this result is also due to Wiman; see
      \cite[Lemma~3.5]{ripponstallarddimensionmero}, where~\eqref{eqn:lowerorder} is proved using a version
      of the Ahlfors distortion theorem. 
      As an application of Theorem~\ref{thm:newEL}, we obtain a new elementary proof of~\eqref{eqn:lowerorder}. 
     
   \begin{cor}[Lower order of functions in $\B$]\label{cor:lowerorder}
     Let $f\in\B$. Then $f$ has lower order at least $1/2$.      
       More precisely, there are constants $c>0$ and $r_0>1$ such that 
         \[ \ln M(r,f) \geq c\cdot \sqrt{r}\]
      for $r\geq r_0$. 
   \end{cor}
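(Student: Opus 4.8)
The plan is to derive the corollary directly from Theorem~\ref{thm:newEL}, without invoking the $\cos\pi\rho$ theorem or distortion theorems. The point is that the improved estimate~\eqref{eqn:newEL} controls the inverse branches of $F$ tightly enough that a point at which $\lvert f\rvert$ equals $e^{x}$ can always be found at distance only $O(x^{2})$ from the origin; feeding this into the definition of $M(r,f)$ immediately yields the exponent $1/2$.

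Concretely, I would proceed as follows. Fix $R$ as in Theorem~\ref{thm:newEL}; this is possible since $f\in\B$ has bounded singular set. As $f$ is transcendental, $\mathcal{T}\neq\emptyset$, so I fix a tract $T$ (a component of $\mathcal{T}$) and let $G\colon\HH_{\ln R}\to T$ be the conformal inverse of $F|_{T}$ (recall that each tract maps conformally \emph{onto} all of $\HH_{\ln R}$). Since $F(G(w))=w$, estimate~\eqref{eqn:newEL} gives
\[
   \lvert G'(w)\rvert \;=\; \frac{1}{\lvert F'(G(w))\rvert} \;\leq\; \frac{2}{\re w - \ln R}
   \qquad (w\in\HH_{\ln R}).
\]
Set $\zeta_{0}\defeq G(\ln R+1)\in T$. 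Integrating along the real axis, which lies in $\HH_{\ln R}$, for $x>\ln R$ the point $\zeta_{x}\defeq G(x)$ satisfies
\[
   \lvert \zeta_{x}-\zeta_{0}\rvert \;\leq\; \int_{\ln R+1}^{x}\frac{2\,\der s}{s-\ln R}
   \;=\; 2\ln(x-\ln R),
\]
so $\lvert \zeta_{x}\rvert\leq \lvert\zeta_{0}\rvert+2\ln(x-\ln R)$. Because $F(\zeta_{x})=x$, we have $\lvert f(\exp\zeta_{x})\rvert=e^{x}$, while $\lvert\exp\zeta_{x}\rvert=e^{\re\zeta_{x}}\leq e^{\lvert\zeta_{x}\rvert}\leq e^{\lvert\zeta_{0}\rvert}(x-\ln R)^{2}$.

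To finish, observe that for all sufficiently large $x$ one has $(x-\ln R)^{2}\leq 4x^{2}$, so the point $\exp\zeta_{x}$ has modulus at most $Kx^{2}$ with $K\defeq 4e^{\lvert\zeta_{0}\rvert}$, while $\lvert f(\exp\zeta_{x})\rvert=e^{x}$. Since $r\mapsto M(r,f)$ is non-decreasing, $M(Kx^{2},f)\geq e^{x}$ for $x$ large; applying this with $x=\sqrt{r/K}$ gives $\ln M(r,f)\geq c\sqrt{r}$ for $r\geq r_{0}$, where $c\defeq K^{-1/2}$ and $r_{0}>1$ is chosen suitably. Taking logarithms once more and dividing by $\ln r$ then yields $\liminf_{r\to\infty}(\ln\ln M(r,f))/\ln r\geq 1/2$.

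I do not expect a genuine obstacle: once Theorem~\ref{thm:newEL} is in hand the argument is essentially bookkeeping. The points needing a little care are that a tract exists (which uses transcendence of $f$), that the base point should be $\ln R+1$ rather than $\ln R$ so that the integral bounding the displacement of $G$ converges, and the standard fact that each tract maps conformally onto the whole half-plane $\HH_{\ln R}$. The decisive feature is that the lower bound in~\eqref{eqn:newEL} is a positive multiple of $\re w-\ln R$, which forces the displacement of $G$ to grow only logarithmically in $x$ — exactly what produces the exponent $1/2$.
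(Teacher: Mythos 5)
Your proof is correct and follows essentially the same route as the paper: the paper's Corollary 2.3 likewise integrates $(\phi^{-1})'$ along the positive real axis using the bound $\lvert(\phi^{-1})'(x)\rvert\leq 2/x$ from Lemma~\ref{lem:newEL} to show the preimage of $t$ moves only $2\ln t$ from a base point, which yields $\ln M(r,f)\geq c\sqrt{r}$. The only cosmetic difference is that you bound the modulus of the point where $\lvert f\rvert=e^{x}$ from above and invoke monotonicity of $M(r,f)$, whereas the paper fixes the radius and uses the intermediate value theorem to hit it exactly.
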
 

\subsection*{Acknowledgements} I am grateful to Walter Bergweiler and Alex Eremenko for helpful comments on a draft of this
 paper, and in particular for discussing  
 the history of 
 Corollary~\ref{cor:lowerorder}. I also thank Chris Bishop, Tania Gricel Ben\'itez L\'opez, Weiwei Cui, Dave Sixsmith and James Waterman for interesting
 discussions and comments concerning the paper. I am particularly grateful to David Mart\'i-Pete for a careful reading of an earlier manuscript; his 
 corrections and thoughtful comments improved the presentation of the article. 
      
\section{Proof of the estimate}

 \begin{lem}[Expansion in logarithmic coordinates]\label{lem:newEL}
   Suppose that $T\subset\C$ is a simply-connected domain such that
     $\exp|_T$ is injective, let $\rho \in\R$ and suppose that 
        $\phi \colon T\to\HH_{\rho}$
        is a conformal isomorphism. Then
           \[ \lvert \phi '(\zeta)\rvert \geq \frac{\re \phi(\zeta) - \rho}{2} 
           \]
      for all $\zeta\in T$. 
 \end{lem}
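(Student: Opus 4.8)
The plan is to apply the Koebe quarter theorem, but --- unlike in the original argument of Eremenko and Lyubich, who apply it to $\phi^{-1}$ directly --- to apply it to the composition $g \defeq \exp\circ\phi^{-1}$. Since $\phi^{-1}$ maps $\HH_{\rho}$ conformally onto $T$ and $\exp|_T$ is injective by hypothesis, the map $g\colon\HH_{\rho}\to\C$ is univalent; and, crucially, $g$ omits the value $0$. So the omitted value driving the quarter-theorem estimate is $0$, which lies at distance exactly $\lvert g(w)\rvert$ from $g(w)$. This is the right normalisation, because $\lvert g'\rvert = \lvert g\rvert\cdot\lvert(\phi^{-1})'\rvert$, so the factor $\lvert g(w)\rvert$ will cancel at the end.

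First I would record the half-plane form of the quarter theorem: if $G\colon\HH_{\rho}\to\C$ is univalent and $w\in\HH_{\rho}$, then
\[ G(\HH_{\rho})\ \supset\ D\Bigl(G(w),\ \tfrac12\bigl(\re w-\rho\bigr)\,\lvert G'(w)\rvert\Bigr). \]
This follows from the usual statement on $\D$ by composing with a M\"obius isomorphism $\tau\colon\D\to\HH_{\rho}$ with $\tau(0)=w$, for which a short computation gives $\lvert\tau'(0)\rvert = 2(\re w-\rho)$.

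Now fix $\zeta\in T$, put $w\defeq\phi(\zeta)$ and $\delta\defeq\re w-\rho>0$, and apply this with $G = g$. Then $\exp(T) = g(\HH_{\rho})$ contains $D\bigl(g(w),\tfrac12\delta\lvert g'(w)\rvert\bigr)$; since $0\notin\exp(T)$, we obtain $\tfrac12\delta\lvert g'(w)\rvert\le\lvert g(w)\rvert$. Differentiating $g = \exp\circ\phi^{-1}$ gives $g' = g\cdot(\phi^{-1})'$, so $\lvert g'(w)\rvert = \lvert g(w)\rvert\,\lvert(\phi^{-1})'(w)\rvert$; dividing by $\lvert g(w)\rvert>0$ yields $\lvert(\phi^{-1})'(w)\rvert\le 2/\delta$. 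Since $(\phi^{-1})'$ is nonvanishing, $\phi'(\zeta) = 1/(\phi^{-1})'(w)$, and $\delta = \re\phi(\zeta)-\rho$; this is exactly the asserted inequality.

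The only genuine idea here is the first step --- composing with $\exp$ so that the relevant omitted value becomes $0$ --- and this is the step I expect to be the main obstacle, in the sense that everything else is routine quarter-theorem bookkeeping. It is worth noting where the improvement of the constant to $2$ comes from: partly from using the full half-plane $\HH_{\rho}$ rather than an inscribed disc of radius $\re w-\rho$ (a factor $2$), and partly from replacing the weaker observation ``$T$ contains no disc of radius exceeding $\pi$'' by ``$\exp(T)$ omits $0$'' (a factor $\pi$), which together account for the passage from $4\pi$ to $2$. The one point to verify carefully is the normalisation $\lvert\tau'(0)\rvert = 2(\re w-\rho)$.
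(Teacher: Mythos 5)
Your proof is correct and follows essentially the same route as the paper: both apply the Koebe quarter theorem to $\exp\circ\phi^{-1}$ precomposed with a M\"obius map $\D\to\HH_\rho$ normalised at $\phi(\zeta)$, using that this univalent map omits $0$. The only cosmetic difference is that the paper normalises by working with $\exp(T-\zeta)$ so that the map sends $0$ to $1$, whereas you keep the factor $\lvert g(w)\rvert$ and cancel it at the end.
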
 
 \begin{proof}
   Post-composing with a translation, we may assume that $\rho=0$. Let $\mathbb{D}$ denote
    the unit disc, and consider the M\"obius transformation  
       \[ M\colon\mathbb{D}\to\HH_0; \quad z\mapsto (1-z)/(1+z). \] 
         Let $\zeta\in T$. The map 
        \[ g\colon \mathbb{D} \to \exp(T-\zeta); \qquad z \mapsto \exp\bigl( \phi^{-1}\bigl( \re (\phi(\zeta))\cdot M(z) + i\cdot \im \phi(\zeta)\bigr) - \zeta \bigr) \]
   is a conformal isomorphism with $g(0)=1$. Since $0\notin \exp(T-\zeta)$, the Koebe quarter theorem implies that 
    \[ 4\geq \lvert g'(0) \rvert = \frac{\re(\phi(\zeta))\cdot \lvert M'(0)\rvert}{\lvert \phi'(\zeta)\rvert}
          = \frac{2\re (\phi(\zeta))}{\lvert \phi'(\zeta)\rvert}. \] 
  So $\lvert \phi'(\zeta)\rvert \geq \re \phi(\zeta)/2$, as claimed. 
\end{proof}

 \begin{proof}[Proof of Theorem~\ref{thm:newEL}]
   If $T$ is a connected component of $\mathcal{T}$, then $T$ and 
     $\phi \defeq F|_T$ satisfy the hypotheses of Lemma~\ref{lem:newEL}.
     This proves~\eqref{eqn:newEL}. To deduce~\eqref{eqn:newELexp} from~\eqref{eqn:newEL}, differentiate the functional equation
     $\exp \circ F = f\circ \exp$. 

    For $f(z) = \cos \sqrt{z}$ and $r>0$, we have 
     \[ \lvert f'(-r) \rvert = \frac{\lvert \sin i \sqrt{r}\rvert}{2\sqrt{r}} . \]
  As $r\to\infty$, 
     \[ \lvert \sin i \sqrt{r}\rvert = \frac{e^{\sqrt{r}}}{2} + O(e^{-\sqrt{r}}) = \lvert f(-r) \rvert + O(e^{-\sqrt{r}}). \] 
   Moreover, 
      \[ \ln \lvert f(-r)\rvert = \sqrt{r} - \ln 2 + O(1/\lvert f(x)\rvert^2). \]
      Hence
        \[\lvert f'(-r)\rvert - \frac{(\ln \lvert f(-r)\rvert + \ln 2) \cdot \lvert f(-r)\rvert}{2r} 
            = O(e^{-\sqrt{r}}/\sqrt{r}) \]
   as $r \to\infty$. We may rewrite this as 
   \[ 
   \frac{\lvert f'(-r)\rvert\cdot r}{ (\ln \lvert f(-r)\rvert) \cdot \lvert f(-r)\rvert} - \frac{1}{2}         
         =  \frac{\ln 2}{\ln \lvert f(-r)\rvert} + O\left(\frac{e^{-\sqrt{r}}\cdot r}{(\ln \lvert f(-r)\rvert)\cdot \lvert f(-r)\rvert \cdot \sqrt{r}}\right) = 
             O(e^{-\sqrt{r}}), \]   
   which proves~\eqref{eqn:cosoptimal}.
 \end{proof}

 For future reference, we also record the following reformulation of Lemma~\ref{lem:newEL} in terms of the hyperbolic density of $T$;
    see e.g.~\cite{beardonminda} for background on hyperbolic geometry.

 \begin{cor}[Hyperbolic density of logarithmic tracts]\label{cor:Testimate}
   Suppose that $T\subset\C$ is a simply-connected domain such that
     $\exp|_T$ is injective, and let $\rho_T$ denote the density of the hyperbolic
     metric of $T$. Then 
     \begin{equation}\label{eqn:logarithmicdensity} \rho_T(\zeta ) \geq \frac{1}{2} \quad \text{for all $\zeta\in T$.}\end{equation}
 \end{cor}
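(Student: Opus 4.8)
The plan is to deduce this directly from Lemma~\ref{lem:newEL} together with the conformal transformation rule for the hyperbolic metric, so there is essentially no new content. The first thing to note is that $T$ is a \emph{proper} subdomain of $\C$: since $\exp|_T$ is injective, $T$ cannot contain two points differing by $2\pi i$, so $T\neq\C$. Being a non-empty, simply-connected, proper subdomain of $\C$, the domain $T$ admits, by the Riemann mapping theorem, a conformal isomorphism $\phi\colon T\to \HH_0$; the hyperbolic metric of $T$ is by definition the pullback under $\phi$ of the hyperbolic metric of $\HH_0$.

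Next I would recall that, with the usual normalisation (curvature $-1$; see~\cite{beardonminda}), the hyperbolic density of the right half-plane $\HH_0$ at a point $w$ equals $1/\re w$. This follows from the fact that the upper half-plane carries the density $w\mapsto 1/\im w$, together with the rotation $w\mapsto iw$ mapping $\HH_0$ conformally onto the upper half-plane. By the conformal transformation rule for hyperbolic densities, $\rho_T(\zeta) = \rho_{\HH_0}(\phi(\zeta))\cdot\lvert\phi'(\zeta)\rvert$, and hence
\[ \rho_T(\zeta) = \frac{\lvert \phi'(\zeta)\rvert}{\re \phi(\zeta)} \qquad (\zeta\in T). \]

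Finally, applying Lemma~\ref{lem:newEL} with $\rho=0$ gives $\lvert \phi'(\zeta)\rvert \geq \re\phi(\zeta)/2$ for all $\zeta\in T$; substituting this into the displayed identity yields $\rho_T(\zeta)\geq 1/2$, which is exactly~\eqref{eqn:logarithmicdensity}. I do not expect any genuine obstacle here; the only points needing (routine) care are fixing the normalisation convention for the hyperbolic metric and invoking the transformation rule correctly, after which the estimate is immediate from the Lemma.
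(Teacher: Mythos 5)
Your argument is correct and coincides with the paper's own proof: pick a conformal isomorphism $\phi\colon T\to\HH_0$, use $\rho_{\HH_0}(\omega)=1/\re\omega$ and the transformation rule, and apply Lemma~\ref{lem:newEL}. The only addition is your (correct, routine) remark that injectivity of $\exp|_T$ forces $T\subsetneq\C$ so that the Riemann mapping theorem applies, which the paper leaves implicit.
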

 \begin{proof}
   Let $\phi\colon T\to \HH_0$ be a conformal isomorphism. The hyperbolic density of $\HH_0$ is given by 
     $\rho_{\HH_0}(\omega ) = 1/\re \omega$. 
    The hyperbolic density $\rho_T$ satisfies
       \[ \rho_T(\zeta) =  \rho_{\HH_0}(\omega) \cdot \lvert \phi'(\zeta)\rvert = \frac{\lvert \phi'(\zeta) \rvert}{\re \phi(\zeta)} \geq \frac{1}{2} \]
       by Lemma~\ref{lem:newEL}. 
 \end{proof}

  The following implies Corollary~\ref{cor:lowerorder}.
    
  \begin{cor}[Lower order in logarithmic coordinates]
   Suppose that $T\subset\C$ is a simply-connected domain such that
     $\exp|_T$ is injective, let $\rho \in\R$ and suppose that 
        \[ \phi\colon T\to\HH_{\rho} \]
        is a conformal isomorphism with $\phi^{-1}(t)\to\infty$ as $t\to+\infty$, 
         $t > 0$. 
        Then there are constants $\gamma>0$ and $\sigma_0>0$ such that 
           \[
               \sup_{\re \zeta = \sigma} \ln \re \phi(\zeta) \geq  \frac{\sigma}{2} - \gamma
           \]
      for $\sigma \geq \sigma_0$. 
  \end{cor}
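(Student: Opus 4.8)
The natural approach is to propagate the expansion estimate of Lemma~\ref{lem:newEL} along the curve in $T$ that $\phi$ maps to the positive real ray of $\HH_\rho$. Write $\gamma(t)\defeq\phi^{-1}(t)$ for real $t>\max(\rho,0)$; then $\gamma$ is a real-analytic curve in $T$ with $\re\phi(\gamma(t))=t$, and $\gamma(t)\to\infty$ as $t\to+\infty$ by hypothesis. Differentiating the identity $\phi(\gamma(t))=t$ gives $\phi'(\gamma(t))\gamma'(t)=1$, so Lemma~\ref{lem:newEL} yields the speed bound
\[
  \lvert\gamma'(t)\rvert=\frac{1}{\lvert\phi'(\gamma(t))\rvert}\le\frac{2}{\re\phi(\gamma(t))-\rho}=\frac{2}{t-\rho}.
\]
Integrating, $\lvert\gamma(t)-\gamma(s)\rvert\le 2\ln\frac{t-\rho}{s-\rho}$ whenever $\max(\rho,0)<s\le t$; in particular $\lvert\gamma(t)\rvert$ grows at most logarithmically in $t$, and $\re\gamma(t)\le\re\gamma(s)+2\ln\frac{t-\rho}{s-\rho}$. (The same inequality also follows from Corollary~\ref{cor:Testimate}, since a conformal isomorphism is a hyperbolic isometry.)

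Fix some $t_1>\max(\rho,0)$ and put $\sigma_1\defeq\re\gamma(t_1)$. The plan hinges on the claim that $\re\gamma(t)\to+\infty$ as $t\to+\infty$; in fact $\limsup_{t\to\infty}\re\gamma(t)=+\infty$ would already suffice. Granting it, the estimate follows quickly. For each $\sigma\ge\sigma_1$, the intermediate value theorem supplies some $t_\sigma\ge t_1$ with $\re\gamma(t_\sigma)=\sigma$, and then the speed bound (with $s=t_1$) forces $\sigma-\sigma_1\le 2\ln\frac{t_\sigma-\rho}{t_1-\rho}$, i.e.\ $t_\sigma\ge\rho+(t_1-\rho)\,e^{(\sigma-\sigma_1)/2}$. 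Since $\gamma(t_\sigma)\in T$ satisfies $\re\gamma(t_\sigma)=\sigma$ and $\re\phi(\gamma(t_\sigma))=t_\sigma$, it follows that
\[
  \sup_{\re\zeta=\sigma}\ln\re\phi(\zeta)\ \ge\ \ln t_\sigma\ \ge\ \frac{\sigma}{2}-\Bigl(\frac{\sigma_1}{2}-\ln(t_1-\rho)\Bigr)+o(1)\qquad(\sigma\to+\infty),
\]
so it is enough to choose $\gamma>0$ large enough and $\sigma_0\ge\sigma_1$ accordingly. (Corollary~\ref{cor:lowerorder} then drops out by applying this with $T$ a logarithmic tract of $F$ and $\phi=F|_T$ — note $\phi^{-1}(t)\to\infty$ since $\lvert f\rvert\to\infty$ along it — and substituting $z=e^\zeta$, $r=e^\sigma$, using $\re\phi(\zeta)=\ln\lvert f(e^\zeta)\rvert$ and $\re\zeta=\ln\lvert e^\zeta\rvert$.)

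The genuinely non-routine step — and the one I expect to be the main obstacle — is establishing $\re\gamma(t)\to+\infty$. The hypothesis only gives $\gamma(t)\to\infty$, a priori compatible with $\re\gamma(t)$ bounded and $\lvert\im\gamma(t)\rvert\to\infty$, so one must use the topology of $T$. This is exactly where simple connectivity of $T$ and the injectivity of $\exp|_T$ should enter: injectivity forbids $T$ from containing two points differing by a nonzero integer multiple of $2\pi i$, which — combined with the logarithmic growth of $\lvert\gamma(t)\rvert$ and the connectedness of $\gamma$ — should prevent the curve from running off to infinity purely in the imaginary direction. I would isolate this as a short topological lemma about the curve $\phi^{-1}((\rho,\infty))$, proved by a winding/compactness argument, and then assemble the computation above.
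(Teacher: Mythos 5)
Your computational skeleton is exactly the paper's: differentiate $\phi(\gamma(t))=t$, apply Lemma~\ref{lem:newEL} to get $\lvert\gamma'(t)\rvert\le 2/(t-\rho)$, integrate to bound $\lvert\gamma(t)-\gamma(t_1)\rvert$ by a logarithm, and finish with the intermediate value theorem; the paper does precisely this (after normalising $\rho=0$), and passes over the point you flag with the phrase ``by the assumption on $\phi$ and the intermediate value theorem''. So your instinct that the unboundedness of $\re\gamma(t)$ is the real content of the step is sound; the problem is that the topological lemma you propose to fill it with is false. Consider $T=\{x+iy:\ y>0,\ e^{-y}<x<3e^{-y}\}$. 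This is a simply connected domain, $\exp|_T$ is injective (two points of $T$ whose heights differ by $2\pi k$, $k\ge 1$, have real parts in the disjoint intervals $(e^{-y},3e^{-y})$ and $(e^{-y-2\pi k},3e^{-y-2\pi k})$, since $3e^{-2\pi}<1$), and yet $0<\re\zeta<3$ throughout $T$. The prime end of $T$ ``at the top'' has impression $\{\infty\}$, so if you normalise the Riemann map $\phi\colon T\to\HH_0$ so that this prime end corresponds to the boundary point $\infty$ of $\HH_0$, then $\phi^{-1}(t)\to\infty$ as $t\to+\infty$ while $\re\phi^{-1}(t)\to 0$. Hence exp-injectivity, simple connectivity and $\gamma(t)\to\infty$ do \emph{not} prevent the ray from escaping vertically with bounded real part, and no winding/compactness argument can produce the missing unboundedness from the stated hypotheses; in this example the vertical line $\{\re\zeta=\sigma\}$ does not even meet $T$ for $\sigma\ge 3$.

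What actually saves the intended application (Corollary~\ref{cor:lowerorder}) is not topology but the maximum modulus principle: when $T$ is a logarithmic tract of an entire function $f$ and $\phi=F|_T$, one has $\re\phi(\zeta)=\ln\lvert f(e^{\zeta})\rvert\le\ln M(e^{\re\zeta},f)$, so if $\re\phi^{-1}(t_n)\le C$ along a sequence $t_n\to+\infty$ then $t_n\le\ln M(e^{C},f)$, a contradiction; thus $\re\phi^{-1}(t)\to+\infty$ there, and your bookkeeping (which is correct, and the same as the paper's) then yields the lower-order bound. So the verdict is: the estimate and the final computation are fine and coincide with the paper's proof, but the step you isolate is genuinely missing, and the route you sketch for it is a dead end. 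You should either impose the unboundedness of $\re$ along $\phi^{-1}((\rho,\infty))$ as an explicit hypothesis (this is what the paper's one-line appeal to the IVT implicitly uses) or verify it in the application via the maximum modulus argument above.
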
    
  \begin{proof}
    Without loss of generality, we may suppose that $\rho = 0$. 
     Set $\zeta_0\defeq \phi^{-1}(1)$ and $\gamma\defeq \re \zeta_0/2$ and $\sigma_0 \defeq \re \zeta_0$.
      Let $\sigma>\sigma_0$; by the assumption on $\phi$ and the intermediate value theorem, 
      there is 
     $t>1$ such that $\re \phi^{-1}(t) = \sigma$. Set $\zeta \defeq \phi^{-1}(t)$. By Lemma~\ref{lem:newEL}, 
     \[ \lvert \zeta - \zeta_0 \rvert \leq \int_{1}^t \lvert (\phi^{-1})'(x) \rvert \der x 
             \leq \int_1^t \frac{2 \der x}{x} = 2 \ln t = 2 \ln \re \phi(\zeta). \]
     
   So 
      \[ \ln \re \phi(\zeta) \geq \frac{\lvert \zeta - \zeta_0\rvert}{2} = \frac{\sigma}{2} - \gamma. \qedhere \] 
  \end{proof}

 \end{document}